%% LyX 2.3.6.1 created this file.  For more info, see http://www.lyx.org/.
%% Do not edit unless you really know what you are doing.
\documentclass[oneside,english]{amsart}
\usepackage[T1]{fontenc}
\usepackage[latin9]{inputenc}
\usepackage{amstext}
\usepackage{amsthm}
\usepackage{amssymb}

\makeatletter
%%%%%%%%%%%%%%%%%%%%%%%%%%%%%% Textclass specific LaTeX commands.
\numberwithin{equation}{section}
\numberwithin{figure}{section}
\theoremstyle{plain}
\newtheorem{thm}{\protect\theoremname}
\theoremstyle{plain}
\newtheorem{lem}[thm]{\protect\lemmaname}
\theoremstyle{remark}
\newtheorem{rem}[thm]{\protect\remarkname}

%%%%%%%%%%%%%%%%%%%%%%%%%%%%%% User specified LaTeX commands.
\numberwithin{thm}{section}
\usepackage{hyperref}

\makeatother

\usepackage{babel}
\providecommand{\lemmaname}{Lemma}
\providecommand{\remarkname}{Remark}
\providecommand{\theoremname}{Theorem}

\begin{document}
\title{Regularity for free multiplicative convolution on the unit circle}
\author{Serban T. Belinschi, Hari Bercovici, and Ching-Wei Ho}
\address{Institut de Math\textbackslash 'ematiques de Toulouse: UMR5219, Université
de Toulouse, CNRS; UPS , F-31062 Toulouse, France}
\email{Serban.Belinschi@math.univ-toulouse.fr}
\address{Mathematics Department, Indiana University, Bloomington, IN 47405,
USA}
\email{bercovic@indiana.edu}
\address{Institute of Mathematics, Academia Sinica, Taipei 10617, Taiwan; Department
of Mathematics, University of Notre Dame, Notre Dame, IN 46556, United
States }
\email{cho2@nd.edu}
\subjclass[2000]{Primary: 46L35. Secondary: 30D05}
\begin{abstract}
Suppose that $\mu_{1}$ and $\mu_{2}$ are Borel probability measures
on the unit circle, both different from unit point masses, and let
$\mu$ denote their free multiplicative convolution. We show that
$\mu$ has no continuous singular part (relative to arclength measure)
and that its density can only be locally unbounded at a finite number
of points, entirely determined by the point masses of $\mu_{1}$ and
$\mu_{2}$. Analogous results were proved earlier for the free additive
convolution on $\mathbb{R}$ and for the free multiplicative convolution
of  Borel probability measures on the positive half-line. 
\end{abstract}

\maketitle

\section{Introduction}

It has been known for some time that free convolutions have a strong
regularizing effect. The earliest instances of this phenomenon were
observed in \cite{v-fish1,bv-otaa,bi-iu}. For the additive case (see
\cite{v-add,bv-IU} or \cite{vdn} for definitions), it was shown
in \cite{Be-ieot,be14} that, given Borel probability measures $\mu_{1},\mu_{2}$
on $\mathbb{R}$, neither of which is a point mass, the free convolution
$\mu=\mu_{1}\boxplus\mu_{2}$ has no singular continuous part relative
to Lebesgue measure, and its density is analytic wherever positive
and finite. In addition, this density is locally bounded unless $\mu_{1}(\alpha_{1})+\mu_{2}(\alpha_{2})\ge1$
for some $\alpha_{1},\alpha_{2}\in\mathbb{R}$. The atomic part of
$\mu$ has finite support and was determined earlier \cite{Be-ieot}.
Analogous results have been obtained in \cite{ji} for the free multiplicative
convolution of Borel probability measures on $[0,+\infty)$. Despite
a strong similarity between these operations, the corresponding result
for free multiplicative convolution of Borel probability measures
on the unit circle $\mathbb{T}$ in the complex plane is still missing.
Recent results on Denjoy-Wolff points \cite[Corollary 3.3]{bbh} allow
us to rectify this omission in Theorem \ref{thm:main}.

The necessary background of subordination is given in Section \ref{sec:subord}
and the main result is proved in Section \ref{sec:Boundedness}. An
application in Section \ref{sec:An-application} yields a strengthening
of the results of \cite{indecomposable} concerning indecomposable
measures relative to free convolution.

\section{\label{sec:subord}Analytic subordination for free multiplicative
convolution}

We begin by recalling the analytical apparatus for the calculation
of free multiplicative convolutions on the unit circle $\mathbb{T}=\{z\in\mathbb{C}\colon|z|=1\}$.
An arbitrary Borel probability measure $\mu$ on $\mathbb{T}$ is
uniquely determined by its \emph{moments}
\[
\ensuremath{m_{n}(\mu)=\int_{\mathbb{T}}t^{n}\,d\mu(t)},\quad n\in\mathbb{N},
\]
and these moments are encoded in the \emph{moment generating function}
\[
\psi_{\mu}(z)=\int_{\mathbb{T}}\frac{tz}{1-tz}\,d\mu(t)=\sum_{n=1}^{\infty}m_{n}(\mu)z^{n}\,.
\]
The formal series $\psi_{\mu}$ actually converges for $z$ in the
unit disk $\mathbb{D}=\{z\in\mathbb{C}:|z|<1\}$, and
\[
\psi_{\mu}(\mathbb{D})\subset\{z\in\mathbb{C}\colon\Re z>-1/2\}.
\]
Observe that
\begin{equation}
2\Re\psi_{\mu}(z)+1=\int_{\mathbb{T}}\Re\left(\frac{\overline{\zeta}+z}{\overline{\zeta}-z}\right)\,{\rm d}\mu(t)=\int_{\mathbb{T}}\Re\left(\frac{\zeta+z}{\zeta-z}\right)\,{\rm d}\mu(\overline{\zeta}),\quad z\in\mathbb{D},\label{eq:de}
\end{equation}
and the last term above is precisely a Poisson integral. It follows
that $\mu$ can be recovered from $\psi_{\mu}$ by taking radial limits
\[
2\pi d\mu(e^{-i\theta})=\lim_{r\uparrow1}(2\Re\psi_{\mu}(e^{i\theta})+1)\,d\theta.
\]
(See for instance, \cite[Chapter 5]{akh}, \cite[Section 3]{bb-imrn},
and \cite[Chapter 1]{baf} for details.) In particular, if $\mu^{{\rm s}}$
denotes the singular part of the measure $\mu$, (\ref{eq:de}) shows
that
\begin{equation}
\lim_{r\uparrow1}\Re\psi_{\mu}(r\overline{\zeta})=+\infty\text{ for }\mu^{{\rm s}}\text{-almost all }\zeta\in\mathbb{T}.\label{eq:si}
\end{equation}
We note for further use the following consequence of (\ref{eq:de}).
\begin{lem}
\label{lem:bounded psi}If $\psi_{\mu}$ is a bounded function on
$\mathbb{D}$, then $\mu$ is absolutely continuous relative to arclength
measure and its density is bounded.
\end{lem}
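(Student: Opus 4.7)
The plan is to read the conclusion directly off the Poisson integral representation recorded in (\ref{eq:de}). Suppose $|\psi_\mu(z)|\le M$ for all $z\in\mathbb{D}$. Then the harmonic function $u(z)=2\Re\psi_\mu(z)+1$ is nonnegative (because (\ref{eq:de}) exhibits it as a Poisson integral of a probability measure) and bounded above by $2M+1$ on $\mathbb{D}$. Two things then need to be checked: that $\mu$ has no singular part with respect to arclength measure, and that its density is bounded.

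For the first point, I would invoke (\ref{eq:si}): if $\mu^{\mathrm{s}}$ were nonzero, there would exist at least one $\zeta\in\mathbb{T}$ with $\lim_{r\uparrow 1}\Re\psi_\mu(r\overline{\zeta})=+\infty$, contradicting the bound $\Re\psi_\mu\le M$. Hence $\mu^{\mathrm{s}}=0$, so $\mu$ is absolutely continuous with respect to arclength, with some density $f$. For the second point, the radial-limit formula displayed just before (\ref{eq:si}) gives
\[
2\pi f(e^{-i\theta})=\lim_{r\uparrow 1}\bigl(2\Re\psi_\mu(re^{i\theta})+1\bigr)
\]
for almost every $\theta$. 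Since the right-hand side is bounded by $2M+1$, we conclude $f\le(2M+1)/(2\pi)$ a.e.

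There is no real obstacle here; the statement is essentially a repackaging of classical facts about bounded Poisson integrals of positive measures on $\mathbb{T}$. The only point requiring a moment's care is the (harmless) reflection $\zeta\mapsto\overline{\zeta}$ appearing in (\ref{eq:de}), which does not affect either absolute continuity or boundedness of the density since arclength measure is invariant under complex conjugation.
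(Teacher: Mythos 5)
Your argument is correct and is exactly the classical reasoning the paper has in mind: the lemma is stated there without proof as a direct consequence of the Poisson representation (\ref{eq:de}), and your use of (\ref{eq:si}) to kill the singular part plus the radial-limit recovery formula to bound the density is the intended route. Nothing to add.
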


Consider now two Borel probability measures $\mu_{1},\mu_{2}$ on
$\mathbb{T}=\{z\in\mathbb{C}\colon|z|=1\}$ and denote by $\mu=\mu_{1}\boxtimes\mu_{2}$
their free mutiplicative convolution. This was first defined in \cite{v-mult}
using the multplication of $^{*}$-free unitary operators and its
calculation---in case the two measures have a nonzero first moment---relied
on the analytic inverses of the functions $\psi_{\mu_{1}}$ and $\psi_{\mu_{2}}$
in the complex plane (see \cite{vdn} for the technical details).
Subsequently, Biane \cite{bi-mz} discovered that $\psi_{\mu}$ is
subordinate to $\psi_{\mu_{j}}$, $j=1,2$ in the sense of Littlewood.
This result implies that---at least when $\mu_{1}$ and $\mu_{2}$
have nonzero first moments---one can describe the function $\psi_{\mu}$
as the unique solution of a system of implicit equations. This method
for the calculation of $\psi_{\mu}$ does in fact extend to arbitrary
$\mu_{1}$ and $\mu_{2}$, as seen in \cite{bb07}. We state the result
below because it is instrumental in the proof of Theorem \ref{thm:main}.
We need the additional notation
\[
\eta_{\mu}(z)=\frac{\psi_{\mu}(z)}{1+\psi_{\mu}(z)},\quad h_{\mu}(z)=\frac{\eta_{\mu}(z)}{z}.
\]
It is easily seen that $\eta_{\mu}(\mathbb{D})\subset\mathbb{D}$,
$\eta_{\mu}(0)=0$, $\eta_{\mu}'(0)=m_{1}(\mu)$, and $h_{\mu}$ extends
to an analytic function from $\mathbb{D}$ to $\overline{\mathbb{D}}$.
If the function $h_{\mu}$ takes values in $\mathbb{T}$ then it is
constant and this happens precisely when $\mu$ is a point mass. The
following statement combines \cite[Theorem 3.2]{bb07} and \cite[Corollary 3.3]{bbh}.
\begin{thm}
\label{thm:aux}Consider Borel probability measures $\mu_{1},\mu_{2}$
on $\mathbb{T}$ and their free multiplicative convolution $\mu=\mu_{1}\boxtimes\mu_{2}$.
There exist unique continuous functions $\omega_{1},\omega_{2}:\mathbb{D}\cup\mathbb{T}\to\mathbb{D}\cup\mathbb{T}$
that are analytic on $\mathbb{D}$ and, in addition,
\begin{enumerate}
\item $\omega_{1}(0)=\omega_{2}(0)=0,$
\item $z\eta_{\mu}(z)=z\eta_{\mu_{1}}(\omega_{1}(z))=z\eta_{\mu_{2}}(\omega_{2}(z))=\omega_{1}(z)\omega_{2}(z)$,
$\omega_{1}(z)=zh_{2}(\omega_{2}(z))$, and $\omega_{2}(z)=zh_{1}(\omega_{1}(z))$
for every $z\in\mathbb{D}\cup\mathbb{T}$. In particular, $\eta_{\mu}$
extends continuously to $\mathbb{T}$. When either $\omega_{1}(z)$
or $\omega_{2}(z)$ belongs to $\mathbb{T}$, the values $\eta_{\mu_{j}}(\omega_{j}(z))$
are understood as radial limits, that is, 
\[
\eta_{\mu_{j}}(\omega_{j}(z))=\lim_{r\uparrow1}\eta_{\mu_{j}}(r\omega_{j}(z))
\]
.
\item if $m_{1}(\mu_{1})=m_{1}(\mu_{2})=0$, the functions $\eta_{\mu},\psi_{\mu},\omega_{1}$,
and $\omega_{2}$ are identically zero.
\end{enumerate}
\end{thm}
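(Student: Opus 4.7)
The plan is to build $\omega_1, \omega_2$ first as analytic self-maps of $\mathbb{D}$ satisfying the interior functional equations, then to extend them continuously to $\mathbb{T}$ using the Denjoy-Wolff type statement \cite[Corollary 3.3]{bbh}, and finally to dispose of the degenerate case (3). For the interior construction I would fix $z \in \mathbb{D}$ and consider the self-map $\Phi_z \colon \overline{\mathbb{D}} \to \overline{\mathbb{D}}$ given by $\Phi_z(w) = z h_{\mu_2}(z h_{\mu_1}(w))$. Because $h_{\mu_1}, h_{\mu_2}$ take values in $\overline{\mathbb{D}}$, one has $|\Phi_z(w)| \le |z| < 1$, so $\Phi_z$ maps $\overline{\mathbb{D}}$ into the compact subset $\{|w| \le |z|\} \subset \mathbb{D}$ and is therefore a strict contraction in the Poincar\'e hyperbolic metric. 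The Denjoy-Wolff theorem (or equivalently the Banach fixed-point theorem in this metric) produces a unique $\omega_1(z) \in \mathbb{D}$ with $\Phi_z(\omega_1(z)) = \omega_1(z)$; setting $\omega_2(z) := z h_{\mu_1}(\omega_1(z))$ yields the companion function and builds in the relation $\omega_1(z) = z h_{\mu_2}(\omega_2(z))$. Analyticity in $z$ follows from uniform convergence of the iterates $\Phi_z^{\circ n}(0)$ on compact subsets of $\mathbb{D}$, and $\omega_1(0) = \omega_2(0) = 0$ is immediate from the fixed-point uniqueness together with $h_{\mu_j}(0) = m_1(\mu_j)$ being finite.

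The algebraic identities in (2) reduce, via $h_\mu(w) = \eta_\mu(w)/w$, to the single identity $\eta_\mu(z) = \eta_{\mu_1}(\omega_1(z))$, which is the characterization of $\boxtimes$ given in \cite[Theorem 3.2]{bb07}; one checks it by matching $\Sigma$-transforms when $m_1(\mu_1) m_1(\mu_2) \ne 0$ (where Biane's subordination directly applies) and extending by weak continuity of free multiplicative convolution. The genuine obstacle lies in the continuous extension to $\mathbb{T}$: Fatou's theorem only supplies radial limits of $\omega_1, \omega_2$ almost everywhere on $\mathbb{T}$, whereas the statement demands continuity at every boundary point. Here I would invoke \cite[Corollary 3.3]{bbh} as the key input: it controls the Denjoy-Wolff behavior of $\Phi_z$ uniformly as $z \to z_0 \in \mathbb{T}$, so that the fixed point $\omega_1(z)$ admits a well-defined limit in $\overline{\mathbb{D}}$ depending continuously on $z_0$. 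With the radial-limit convention for $\eta_{\mu_j}(\omega_j(z))$ when $\omega_j(z) \in \mathbb{T}$, all functional identities then extend by continuity from $\mathbb{D}$ to $\mathbb{D} \cup \mathbb{T}$; the uniqueness statement transfers from $\mathbb{D}$ to the boundary by continuity as well.

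Finally, for part (3), assume $m_1(\mu_1) = m_1(\mu_2) = 0$. Since $\eta_{\mu_j}(w) = m_1(\mu_j) w + O(w^2)$ near $0$, one has $h_{\mu_j}(0) = \eta_{\mu_j}'(0) = m_1(\mu_j) = 0$. Consequently $\Phi_z(0) = z h_{\mu_2}(z h_{\mu_1}(0)) = z h_{\mu_2}(0) = 0$ for every $z \in \mathbb{D}$, so $0$ is a fixed point of $\Phi_z$; uniqueness forces $\omega_1 \equiv 0$ on $\mathbb{D}$, and then $\omega_2(z) = z h_{\mu_1}(0) = 0$ and $\eta_\mu(z) = \eta_{\mu_1}(0) = 0$ as well. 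The identically zero functions extend trivially to $\mathbb{T}$.
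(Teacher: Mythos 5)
Your proposal is correct and follows essentially the same route as the paper, which offers no independent proof but simply combines \cite[Theorem 3.2]{bb07} (whose fixed-point construction of $\omega_{1},\omega_{2}$ via the strict contraction $\Phi_{z}$ you reproduce accurately, including the identification with $\boxtimes$ and the degenerate case $m_{1}(\mu_{1})=m_{1}(\mu_{2})=0$) with \cite[Corollary 3.3]{bbh} for the continuous extension to $\mathbb{T}$, which you likewise delegate to that result.
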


\section{Boundedness and the lack of a singular continuous part\label{sec:Boundedness}}

We are ready now to identify the singular behavior of a free multiplicative
convolution on $\mathbb{T}$. Of course, part (1) was proved in \cite{Be-ieot}.
\begin{lem}
\label{lem:atom or unbounded density}Suppose that $\mu_{1}$ and
$\mu_{2}$ are Borel probability measures on $\mathbb{T}$, neither
of which is a unit point mass, set $\mu=\mu_{1}\boxtimes\mu_{2}$,
and let $\alpha\in\mathbb{T}$.
\begin{enumerate}
\item If $\mu(\{\alpha\})>0$ then there exist $\alpha_{1},\alpha_{2}\in\mathbb{T}$
such that $\alpha_{1}\alpha_{2}=\alpha$ and $\mu_{1}(\{\alpha_{1}\})+\mu_{2}(\{\alpha_{2}\})=1+\mu(\{\alpha\})$.
\item If $\psi_{\mu}$ is unbounded near $1/\alpha$ then there exist $\alpha_{1},\alpha_{2}\in\mathbb{T}$
such that $\alpha_{1}\alpha_{2}=\alpha$ and $\mu_{1}(\{\alpha_{1}\})+\mu_{2}(\{\alpha_{2}\})\ge1$.
\end{enumerate}
\end{lem}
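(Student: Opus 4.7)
The plan is to focus on part (2), since part (1) is taken from \cite{Be-ieot}. The approach has three stages: translate the analytic hypothesis on $\psi_\mu$ into a boundary condition on $\eta_\mu$ and read off the candidate pair $(\alpha_1,\alpha_2)$ from Theorem \ref{thm:aux}; identify $\mu_j(\{\alpha_j\})$ via Julia--Carath\'eodory; and derive the inequality from an exact chain-rule identity along a radial approach.

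Since $\eta_\mu=\psi_\mu/(1+\psi_\mu)$ extends continuously to $\mathbb{T}$ by Theorem \ref{thm:aux}, the hypothesis that $\psi_\mu$ is unbounded near $1/\alpha$ is equivalent to $\eta_\mu(1/\alpha)=1$. The factorization $(1/\alpha)\,\eta_\mu(1/\alpha)=\omega_1(1/\alpha)\,\omega_2(1/\alpha)$ has modulus-one left-hand side, forcing $|\omega_j(1/\alpha)|=1$. Writing $\beta_j:=\omega_j(1/\alpha)\in\mathbb{T}$, one has $\beta_1\beta_2=1/\alpha$, so setting $\alpha_j:=1/\beta_j$ gives $\alpha_1\alpha_2=\alpha$. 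Evaluating $\omega_j(z)=z\,h_{3-j}(\omega_{3-j}(z))$ at $z=1/\alpha$ with the radial-limit interpretation of Theorem \ref{thm:aux} yields $h_j(\beta_j)=\alpha_j$, and $\eta_\mu=\eta_{\mu_j}\circ\omega_j$ yields $\eta_{\mu_j}(\beta_j)=1$. Since $\mu_j$ is not a point mass, $h_j$ maps $\mathbb{D}$ strictly into $\mathbb{D}$; Julia--Carath\'eodory at $\beta_j$ then produces an angular derivative $a_j\in(0,\infty]$. A short computation from $\eta_{\mu_j}(w)=w\,h_j(w)$ and $h_j(\beta_j)=1/\beta_j$ gives $|\eta_{\mu_j}'(\beta_j)|=1+a_j$; combined with the Poisson-integral formula
\[
2\Re\psi_{\mu_j}(w)+1=\frac{1-|\eta_{\mu_j}(w)|^2}{|1-\eta_{\mu_j}(w)|^2}
\]
and the standard Herglotz extraction $\mu_j(\{\alpha_j\})=\lim_{r\uparrow1}(1-r)\Re\psi_{\mu_j}(r\beta_j)$, this yields $\mu_j(\{\alpha_j\})=1/(1+a_j)$, with the convention $1/\infty=0$.

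The desired inequality $\mu_1(\{\alpha_1\})+\mu_2(\{\alpha_2\})\ge1$ is algebraically equivalent to $a_1a_2\le1$. To prove this, I work along the radial path $z=r/\alpha$, $r\in(0,1)$, and set
\[
b_j(r):=\frac{1-|\omega_j(r/\alpha)|}{1-r},\qquad a_j(r):=\frac{1-|h_j(\omega_j(r/\alpha))|}{1-|\omega_j(r/\alpha)|}.
\]
From $|\omega_j(r/\alpha)|=r\,|h_{3-j}(\omega_{3-j}(r/\alpha))|$ and the algebraic identity $1-rs=(1-r)+r(1-s)$, the exact chain-rule relation $b_j(r)=1+r\,a_{3-j}(r)\,b_{3-j}(r)$ holds for $j=1,2$; substituting one into the other yields
\[
b_1(r)\bigl(1-r^2a_1(r)a_2(r)\bigr)=1+r\,a_2(r),
\]
and strict positivity of $b_1(r)$ and of $1+r\,a_2(r)$ forces $r^2a_1(r)a_2(r)<1$ for every $r\in(0,1)$.

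The main obstacle is the limit passage: the pointwise bound gives $\limsup_{r\uparrow1}a_1(r)a_2(r)\le1$, while $\omega_j(r/\alpha)\to\beta_j$ in $\mathbb{D}$ together with the $\liminf$-definition of the angular derivative give $\liminf_{r\uparrow1}a_j(r)\ge a_j$. The multiplicative $\liminf$ inequality for positive sequences then yields $a_1a_2\le\liminf_ra_1(r)a_2(r)\le1$; in particular both $a_j$ must be finite, both $\mu_j$ have genuine atoms at $\alpha_j$, and $\mu_1(\{\alpha_1\})+\mu_2(\{\alpha_2\})=1/(1+a_1)+1/(1+a_2)\ge1$ follows.
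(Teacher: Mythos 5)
Your proof is correct and follows essentially the same route as the paper's: evaluate the subordination relations of Theorem \ref{thm:aux} at $\beta=1/\alpha$ to get $\omega_j(\beta)\in\mathbb{T}$ with radial limit $\eta_{\mu_j}\to1$ there, identify $\mu_j(\{\alpha_j\})$ with the reciprocal of the Julia--Carath\'eodory derivative of $\eta_{\mu_j}$ at $\omega_j(\beta)$, and reduce the conclusion to the product inequality $a_1a_2\le1$, which is precisely the paper's $\left(\frac{1}{\mu_1(\{\alpha_1\})}-1\right)\left(\frac{1}{\mu_2(\{\alpha_2\})}-1\right)\le1$. The only difference is in the bookkeeping of that last inequality: you derive it from an exact identity along the radial path ($b_j(r)=1+r\,a_{3-j}(r)b_{3-j}(r)$, hence $r^2a_1(r)a_2(r)<1$) together with superadditivity of $\liminf$, while the paper runs the same Julia--Carath\'eodory $\liminf$ estimate over an unrestricted approach $z\to\beta$ (with Lindel\"of's theorem supplying the radial limits) and combines the two one-sided bounds via $\left[\limsup\right]^{-1}\le\left[\liminf\right]^{-1}$.
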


\begin{proof}
We only prove (2). As already mentioned, if $m_{1}(\mu_{1})=m_{1}(\mu_{2})=0$,
then $\mu$ is the Haar measure on $\mathbb{T}$, which has no singular
part and a density identically equal to $1/2\pi$. Inded, by Theorem
\ref{thm:aux}(3) $\psi_{\mu}$ is identically zero, in particular
bounded. For the remainder of the proof, we assume that at least one
of $m_{1}(\mu_{1}),m_{1}(\mu_{2})$ is non-zero, and thus the functions
$\psi_{\mu},\omega_{1},\omega_{2}$ of Theorem \ref{thm:aux} are
not constant. Suppose now that $\beta=1/\alpha$ is such that $\eta_{\mu}(\beta)=1$
or, equivalently
\[
\psi_{\mu}(\beta)=\lim_{r\uparrow1}\psi_{\mu}(r{\beta})=\infty.
\]
 Setting $\alpha_{1}=\omega_{1}(\beta)$ and $\alpha_{2}=\omega_{2}(\beta),$Theorem
\ref{thm:aux}(2) yields the equality $\alpha_{1}\alpha_{2}=\beta$.
Since $|\alpha_{j}|\le1$, it follows that in fact $\alpha_{j}\in\mathbb{T}$
for $j=1,2$. The subordination in Theorem \ref{thm:aux}(2) also
yields
\[
\ensuremath{\lim_{z\to\beta}\eta_{\mu_{j}}(\omega_{j}(z))=\eta_{\mu}(\beta)=1,\quad j=1,2,}
\]
and then 
\[
\lim_{r\uparrow1}\eta_{\mu_{j}}(r\alpha_{j})=1,\quad j=1,2,
\]
by Lindel\"{o}f's Theorem (see \cite[Theorem 2.3]{cluster}).

An application of the dominated convergence theorem shows that 
\[
\lim_{r\uparrow1}(1-r)\psi_{\mu_{j}}(r\alpha_{j})=\mu(\{1/\alpha_{j}\})\in[0,1),\quad j=1,2.
\]
In terms of the functions $\eta_{\mu_{j}}$, this amounts to 
\[
\lim_{r\uparrow1}\frac{\eta_{\mu_{j}}(r\alpha_{j})-1}{r-1}=\frac{1}{\mu_{j}(\{1/\alpha_{j}\})},\quad j=1,2,
\]
where the right-hand side is understood as $\infty$ if $\mu_{j}(\{1/\alpha_{j}\})=0$.
Using Julia-Carath\'eodory derivatives (see, for instance, \cite[Chapter I, Exercise 7]{baf})
this relation can be rewritten as $\eta_{\mu}'(\omega_{1}(\alpha))=1/(\mu_{j}(\{1/\alpha_{j}\}))$.
Properties of this derivative imply now that
\begin{align*}
\frac{1}{\mu_{1}(\{1/\alpha_{1}\})}-1 & =\liminf_{w\to\alpha_{1}}\frac{|\eta_{\mu_{1}}(w)|-1}{|w|-1}-1\\
 & =\liminf_{w\to\alpha_{1}}\frac{|\eta_{\mu_{1}}(w)|-|w|}{|w|-1}\\
\text{(substitute \ensuremath{w=\omega_{1}(z)})} & \le\liminf_{z\to\beta}\frac{|\eta_{\mu_{1}}(\omega_{1}(z))|-|\omega_{1}(z)|}{|\omega_{1}(z)|-1}\\
\text{(Theorem \ref{thm:aux}}) & =\liminf_{z\to\beta}\frac{|\omega_{1}(z)|}{|z|}\frac{|\omega_{2}(z)|-|z|}{|\omega_{1}(z)|-1}\\
 & =\liminf_{z\to\beta}\frac{|\omega_{2}(z)|-|z|}{|\omega_{1}(z)|-1}\\
 & \le\liminf_{z\to\beta}\frac{1-|\omega_{2}(z)|}{1-|\omega_{1}(z)|}.
\end{align*}
Switching the roles of $\mu_{1}$ and $\mu_{2}$ we obtain
\begin{align*}
\frac{1}{\mu_{2}(\{1/\alpha_{2}\})}-1 & \le\liminf_{z\to\beta}\frac{1-|\omega_{1}(z)|}{1-|\omega_{2}(z)|}=\left[\limsup_{z\to\beta}\frac{1-|\omega_{2}(z)|}{1-|\omega_{1}(z)|}\right]^{-1}\\
 & \le\left[\liminf_{z\to\beta}\frac{1-|\omega_{2}(z)|}{1-|\omega_{1}(z)|}\right]^{-1}\\
 & \le\left[\frac{1}{\mu_{1}(\{1/\alpha_{1}\})}-1\right]^{-1}.
\end{align*}
 A simple calculation shows now that the inequalty
\[
\left(\frac{1}{\mu_{2}(\{1/\alpha_{2}\})}-1\right)\left(\frac{1}{\mu_{1}(\{1/\alpha_{1}\})}-1\right)\le1
\]
is equivalent to $\mu_{1}(\{1/\alpha_{1}\})+\mu_{2}(\{1/\alpha_{2}\})\ge1$,
thus concluding the proof.
\end{proof}
We are now ready to state and prove the main result of this paper. 
\begin{thm}
\label{thm:main} Consider Borel probability measures $\mu_{1},\mu_{2}$
on $\mathbb{T}$ and their free multiplicative convolution $\mu=\mu_{1}\boxtimes\mu_{2}$.
Suppose that neither $\mu_{1}$ nor $\mu_{2}$ is a point mass. Then\emph{:}
\begin{enumerate}
\item The singular continuous part of $\mu$ relative to the arclength measure
is zero.
\item If
\begin{equation}
\max\{\mu_{1}(\{\alpha_{1}\})+\mu_{2}(\{\alpha_{2}\})\colon\alpha_{1},\alpha_{2}\in\mathbb{T}\}\le1,\label{eq:max<1-1}
\end{equation}
then $\mu$ is absolutely continuous relative to the arclength measure.
\item If the inequality in \emph{(\ref{eq:max<1-1})} is strict, then the
density of $\mu$ relative to the arclength measure is bounded.
\end{enumerate}
\end{thm}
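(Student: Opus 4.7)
The strategy is to use Lemma \ref{lem:atom or unbounded density} together with the characterization of the singular part via (\ref{eq:si}) and, for the bounded-density statement, Lemma \ref{lem:bounded psi}. I plan to prove the three assertions in the order (1), (2), (3), since (2) follows cleanly from (1) combined with the first half of the previous lemma.

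For (1), the key observation is that (\ref{eq:si}) picks out a $\mu^{\rm s}$-full-measure set of $\zeta\in\mathbb{T}$ at which $\psi_\mu(r\bar\zeta)\to+\infty$; in particular $\psi_\mu$ is unbounded near $1/\zeta=\bar\zeta$ for each such $\zeta$. Applying Lemma \ref{lem:atom or unbounded density}(2) with $\alpha=\zeta$, each such $\zeta$ factors as $\alpha_1\alpha_2$ with $\mu_1(\{\alpha_1\})+\mu_2(\{\alpha_2\})\ge 1$. This inequality forces at least one of $\alpha_1,\alpha_2$ to be an atom of mass at least $1/2$, and each $\mu_j$ possesses at most one such atom, so the set of admissible pairs $(\alpha_1,\alpha_2)$ is finite and hence so is the set of admissible products $\zeta=\alpha_1\alpha_2$. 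Consequently $\mu^{\rm s}$ is carried by a finite set; it is purely atomic, and the singular continuous part vanishes.

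For (2), Lemma \ref{lem:atom or unbounded density}(1) rules out atoms of $\mu$ under the hypothesis: any atom $\alpha$ of $\mu$ would produce $\alpha_1,\alpha_2$ with mass sum $1+\mu(\{\alpha\})>1$, contradicting (\ref{eq:max<1-1}). Combined with (1), this shows $\mu$ has trivial singular part, so it is absolutely continuous.

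For (3), the strict-inequality hypothesis together with the contrapositive of Lemma \ref{lem:atom or unbounded density}(2) yields that $\psi_\mu$ is bounded in some neighborhood (inside $\mathbb{D}$) of every point of $\mathbb{T}$. I then cover $\mathbb{T}$ by finitely many such neighborhoods; on the complementary compact subset of $\mathbb{D}$ the function $\psi_\mu$ is bounded by continuity, so $\psi_\mu$ is bounded on all of $\mathbb{D}$, and Lemma \ref{lem:bounded psi} furnishes the bounded density. The only subtlety I anticipate is verifying that the strict inequality in (\ref{eq:max<1-1}) is uniform rather than merely pointwise---equivalently, that the supremum there is attained. This holds because each $\mu_j$ has only finitely many atoms of mass exceeding any positive threshold, so the supremum reduces to a maximum over a finite set, and the hypothesis then supplies a $\delta>0$ with $\mu_1(\{\alpha_1\})+\mu_2(\{\alpha_2\})\le 1-\delta$ for all $\alpha_1,\alpha_2\in\mathbb{T}$. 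Nothing else in the argument looks delicate.
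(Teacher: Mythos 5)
Your argument is correct and, in substance, it is the paper's argument: part (1) rests on (\ref{eq:si}) plus Lemma \ref{lem:atom or unbounded density}(2) and the finiteness of the set of pairs with $\mu_{1}(\{\alpha_{1}\})+\mu_{2}(\{\alpha_{2}\})\ge1$; part (2) is Lemma \ref{lem:atom or unbounded density}(1) combined with (1); part (3) reduces to boundedness of $\psi_{\mu}$ and Lemma \ref{lem:bounded psi}. The only real divergence is in (3): the paper globalizes by noting that $\eta_{\mu}$ extends continuously to $\overline{\mathbb{D}}$ (Theorem \ref{thm:aux}) and, never taking the value $1$ there, is bounded away from $1$, so $\psi_{\mu}=\eta_{\mu}/(1-\eta_{\mu})$ is bounded; you instead patch together local boundedness near each boundary point by a finite cover of $\mathbb{T}$, which is equally valid, and your worry about uniformity is unnecessary---pointwise strictness of the inequality already lets you apply the contrapositive of Lemma \ref{lem:atom or unbounded density}(2) at every $\alpha\in\mathbb{T}$. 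One justification in (1) should be repaired, though the conclusion is fine (the paper calls it obvious): it is not true that each $\mu_{j}$ has at most one atom of mass $\ge1/2$ (it may have two of mass exactly $1/2$), and, more importantly, pinning one coordinate of an admissible pair to a finite set does not by itself make the set of pairs finite. The correct reason uses the standing hypothesis that neither measure is a point mass: $s_{j}:=\sup_{\alpha\in\mathbb{T}}\mu_{j}(\{\alpha\})<1$, so any pair with $\mu_{1}(\{\alpha_{1}\})+\mu_{2}(\{\alpha_{2}\})\ge1$ must satisfy $\mu_{1}(\{\alpha_{1}\})\ge1-s_{2}>0$ and $\mu_{2}(\{\alpha_{2}\})\ge1-s_{1}>0$, and each of these conditions admits only finitely many solutions; this is exactly the threshold observation you already invoke in (3).
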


\begin{rem}
It is remarkable that, for all free convolutions (see \cite{Be-ieot,ji}),
only the atomic parts of $\mu_{1},\mu_{2}$ have an impact on the
local boundedness of the density of their convolution. 
\end{rem}

\begin{proof}
The set $\{(\alpha_{1},\alpha_{2})\in\mathbb{T}^{2}:\mu_{1}(\{\alpha_{1}\})+\mu_{2}(\{\alpha_{2}\})\ge1\}$
is obviously finite. Therefore the set $S=\{\alpha\in\mathbb{T}:\eta_{\mu}(\{1/\alpha\})=1\}$
is finite as well. Since the support of the singular summand of $\mu$
is contained in $S$, it follows that this summand is a finite sum
of point masses. This proves (1). Suppose now that (\ref{eq:max<1-1})
holds. Then Lemma \ref{lem:atom or unbounded density}(1) shows that
$\mu$ is absolutely contiuous. Finally, suppose that the inequality
(\ref{eq:max<1-1}) is strict. Then Lemma \ref{lem:atom or unbounded density}(2)
implies that that $\eta_{\mu}$ does not take the value $1$ at any
point on $\mathbb{T}$. Since $\eta_{\mu}$ is continuous on $\overline{\mathbb{D}}$,
it must be bounded away from $1$. Thus $\psi_{\mu}=\eta_{\mu}/(1-\eta_{\mu})$
is a bounded function. Then (3) follows from Lemma \ref{lem:bounded psi}.
\end{proof}
\begin{rem}
Suppose that $\mu_{1}(\{\alpha_{1}\})+\mu_{2}(\{\alpha_{2}\})=1$
for some $\alpha_{1},\alpha_{2}\in\mathbb{T}.$ It was shown in \cite{Be-ieot}
that, setting $\beta_{j}=1/\alpha_{j}$ and $\beta=\beta_{1}\beta_{2}$,
we have $\omega_{j}(\beta)=\beta_{j}$ for $j=1,2$, but, of course,
$\mu(\{1/\beta\})=0$. (This can also be proved using the results
of \cite{bbh} and the `chain rule' for Julia-Carth\'eodory derivatives.)
In all computable examples, the density of $\mu$ is unbounded near
$1/\beta$. We suspect that this is true in full generality.
\end{rem}

\section{An application\label{sec:An-application}}

The following statement extends the main result of \cite{indecomposable}
for probability measures on the circle. Nearly identical proofs yield
the corresponding extensions for free additive convolution and for
free multiplicative convolution on the positive half-line. For these
two convolutions, it is not necessary to assume that one of the convolved
measures has more than two points in its support. The condition $\eta_{\mu}(\alpha)=1$
in the statement amounts to the requirement that either $\gamma$
is an atom of $\mu$, or the density of $\mu$ is unbounded near $\gamma$
(or both).
\begin{thm}
\label{thm:consecutive atoms}Consider Borel probability measures
$\mu_{1},\mu_{2}$ on $\mathbb{T}$, different from point masses,
and set $\mu=\mu_{1}\boxtimes\mu_{2}$. Suppose that $J\subset\mathbb{T}$
is an open arc such that each endpoint $\alpha$ of $J$ satisfies
$\eta_{\mu}(\alpha)=1$. If either $\mu_{1}$ or $\mu_{2}$ has more
than two points in its support, then $\mu(J)>0$.
\end{thm}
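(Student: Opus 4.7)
The plan is to argue by contradiction. Suppose $\mu(J)=0$; I want to show that this forces both $\mu_{1}$ and $\mu_{2}$ to have support of at most two points, contradicting the hypothesis. The Poisson representation (\ref{eq:de}) implies that the vanishing of the density of $\mu$ on $J$ is equivalent to $\Re\psi_{\mu}\equiv-1/2$ on the conjugate arc $\bar{J}=\{\bar{z}:z\in J\}\subset\mathbb{T}$, that is, $|\eta_{\mu}(z)|=1$ for every $z\in\bar{J}$. Combined with the identity $z\eta_{\mu}(z)=\omega_{1}(z)\omega_{2}(z)$ from Theorem \ref{thm:aux}(2) and the fact that $|\omega_{j}|\le1$ on $\overline{\mathbb{D}}$, this forces $|\omega_{1}(z)|=|\omega_{2}(z)|=1$ throughout $\bar{J}$. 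In particular, each $\omega_{j}$ extends analytically across $\bar{J}$ by Schwarz reflection.

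Denote the endpoints of $J$ by $\alpha_{1},\alpha_{2}$ and set $\beta_{i}=1/\alpha_{i}$, so that $\beta_{1},\beta_{2}$ are the endpoints of $\bar{J}$. Write $\alpha_{i}^{(j)}=1/\omega_{j}(\beta_{i})\in\mathbb{T}$. Applying Lemma \ref{lem:atom or unbounded density}(2) at each endpoint of $J$ gives
\[
\mu_{1}(\{\alpha_{i}^{(1)}\})+\mu_{2}(\{\alpha_{i}^{(2)}\})\ge1,\qquad i=1,2.
\]
Adding the two inequalities and regrouping by index yields
\[
[\mu_{1}(\{\alpha_{1}^{(1)}\})+\mu_{1}(\{\alpha_{2}^{(1)}\})]+[\mu_{2}(\{\alpha_{1}^{(2)}\})+\mu_{2}(\{\alpha_{2}^{(2)}\})]\ge2.
\]
In the generic case where both $\omega_{1}(\beta_{1})\ne\omega_{1}(\beta_{2})$ and $\omega_{2}(\beta_{1})\ne\omega_{2}(\beta_{2})$, each bracketed sum is bounded above by $\mu_{j}(\mathbb{T})=1$, so both must equal $1$. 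Hence $\mu_{j}$ is supported on the two-point set $\{\alpha_{1}^{(j)},\alpha_{2}^{(j)}\}$ for $j=1,2$, contradicting the assumption that at least one of $\mu_{1},\mu_{2}$ has more than two support points.

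The main obstacle is the degenerate case, in which exactly one of the $\omega_{j}$'s---say $\omega_{1}$---collapses the two endpoints, $\omega_{1}(\beta_{1})=\omega_{1}(\beta_{2})=:a$. (Both collapsing simultaneously is impossible, since then $\beta_{1}=\omega_{1}(\beta_{1})\omega_{2}(\beta_{1})=\omega_{1}(\beta_{2})\omega_{2}(\beta_{2})=\beta_{2}$.) Here the inequalities alone yield only $\mu_{1}(\{1/a\})\ge1/2$ and two distinct atoms of $\mu_{2}$ of mass at least $1-\mu_{1}(\{1/a\})$, which is insufficient. To close the argument I would exploit the analytic extension of $\omega_{1}$ across $\bar{J}$: the function is analytic on a neighborhood of $\bar{J}$, takes unimodular boundary values on $\bar{J}$, and returns to the value $a$ at both endpoints, which constrains its local degree and branching on $\bar{J}$. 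Combining this with the Julia--Carathéodory chain rule (as invoked in the final remark of Section \ref{sec:Boundedness}) and the subordination identities $\omega_{2}(z)=zh_{\mu_{1}}(\omega_{1}(z))$ together with $\eta_{\mu}=\eta_{\mu_{1}}\circ\omega_{1}$, one should be able to force $\mu_{1}$ to have no support beyond $\{1/a\}$ and a single additional atom, and simultaneously restrict $\mu_{2}$ to the two atoms already identified. This rigidity produces the same two-point conclusion as in the generic case and yields the desired contradiction.
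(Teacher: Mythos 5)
Your argument splits into the same two cases as the paper's, and the ``generic'' case is handled correctly and exactly as the paper does it implicitly: if $\omega_{1}$ and $\omega_{2}$ both separate the two endpoints, the two endpoint inequalities from Lemma \ref{lem:atom or unbounded density} sum to force each $\mu_{j}$ to be carried by two points, contradicting the support hypothesis. But note that this case never needs $\mu(J)=0$ at all; under the hypotheses of the theorem it simply cannot occur. So the entire content of the theorem lives in your ``degenerate'' case, where one subordination function, say $\omega_{1}$, identifies the two endpoints --- and there you do not give a proof. The passage ``which constrains its local degree and branching \dots one should be able to force $\mu_{1}$ to have no support beyond \dots'' is a plan, not an argument: nothing is derived from the Schwarz reflection or the chain rule, and it is not clear how those tools alone would control the support of $\mu_{1}$ away from the endpoints. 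As written, the proof is incomplete precisely at the step where the assumption $\mu(J)=0$ must be contradicted.

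The way the paper closes this case is simpler and is worth comparing with your sketch. Assume $\mu(J)=0$ and $\omega_{1}(\alpha)=\omega_{1}(\beta)$ for the endpoints $\alpha,\beta$ of $J$. Then $\omega_{1}$ restricted to $J$ is an injective continuous map into $\mathbb{T}$ (its boundary values are unimodular there and its argument is strictly increasing), and moreover $\mu_{1}(\omega_{1}(J))=0$. Injectivity together with $\omega_{1}(\alpha)=\omega_{1}(\beta)$ forces $\omega_{1}(J)=\mathbb{T}\setminus\{\omega_{1}(\alpha)\}$, so $\mu_{1}$ would be concentrated at the single point $\omega_{1}(\alpha)$, contradicting the standing assumption that $\mu_{1}$ is not a point mass. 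If you want to salvage your writeup, this measure-theoretic/winding argument (injectivity of the boundary map plus the push-forward of the null arc) is the missing ingredient; the local-degree and Julia--Carath\'eodory considerations you propose do not by themselves yield the needed rigidity.
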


\begin{proof}
Let $\alpha$ and $\beta$ be the two endpoints of $J$, and let $\omega_{j}$
denote the subordination function of $\eta_{\mu}$ relative to $\eta_{\mu_{j}}$.
By Lemma \ref{lem:atom or unbounded density}, the points $\alpha_{j}=\omega_{j}(\alpha)$
and $\beta_{j}=\omega_{j}(\beta)$ satisfy $\mu_{1}(\{\alpha_{1}\})+\mu_{2}(\{\alpha_{2}\})\ge1$
and $\mu_{1}(\{\beta_{1}\})+\mu_{2}(\{\beta_{2}\})\ge1$. The hypothesis
implies that either $\alpha_{1}=\beta_{1}$ or $\alpha_{2}=\beta_{2}$.
Indeed, otherwise it would follow that the support of $\mu_{j}$ is
$\{\alpha_{j},\beta_{j}\}$, $j=1,2$. Switching, if necessary, the
roles of $\mu_{1}$ and $\mu_{2}$, we may assume that $\alpha_{1}=\beta_{1}$,
so $\omega_{1}(\alpha)=\omega_{1}(\beta)$. If $\mu(J)=0$, the function
$\omega_{j}$ maps $J$ to $\mathbb{T}$ injectively and $\mu_{1}(\omega_{1}(J))=0$.
Then the condition $\omega_{1}(\alpha)=\omega_{1}(\beta)$ implies
that $\omega_{1}(J)=\mathbb{T}\backslash\{\omega_{1}(\alpha)\}$,
contrary to the hypothesis that $\mu_{1}$ is not a point mass. This
contradiction yields the desired conclusion that $\mu(J)\ne0.$
\end{proof}

\end{document}